\theoremstyle{plain}
\newcommand{\tens}[1][]{\mathbin{\otimes_{\raise1.5ex\hbox to-.1em{}{#1}}}}
\newtheorem{theorem}{Theorème}[section]
\newtheorem{corollary}[theorem]{Corollaire}
\theoremstyle{definition}}
\theoremstyle{definition}}
\theoremstyle{definition}}
\theoremstyle{definition}}
\theoremstyle{definition}\newtheorem{definition}[theorem]{Definition}}
\theoremstyle{definition}}
\theoremstyle{definition}}
\author{Julien Grivaux}
\address{Sorbonne Université \\ 
Institut de Mathématiques de Jussieu-Paris Rive Gauche \\
Case 247, 4 place Jussieu \\
F-75005, Paris, France.}
\email{jgrivaux@math.cnrs.fr}
\urladdr{http://jgrivaux.perso.math.cnrs.fr/}
\title{Modèles géométriques attachés aux paires réductives}
\begin{document}

\begin{abstract}
L'objet de cet article d'exposition est de présenter une introduction à l'article \cite{calaque_ext_2017}, et d'expliquer plus généralement comment le dictionnaire entre théorie de Lie et géométrie algébrique, développé dans \cite{calaque_pbw_2013} à la suite des travaux \cite{kapranov_rozansky-witten_1999} et \cite{markarian_atiyah_2009}, peut être consolidé au sens où les énoncés géométriques ne sont plus seulement des analogues de résultats algébriques, mais en sont des extensions pour certains objets de Lie dans des catégories dérivées.
\end{abstract}
\vspace*{1.cm}
\maketitle
\tableofcontents

\section{Introduction}
Les travaux de Kapranov \cite{kapranov_rozansky-witten_1999} et Markarian \cite{markarian_atiyah_2009} ont permis d'associer à toute variété algébrique lisse $X$ sur un corps de caractéristique zéro un crochet de Lie sur le fibré tangent décalé $\mathrm{T}_X[-1]$. Ce crochet de Lie (catégorique) est donné par la classe d'Atiyah du fibré tangent $\mathrm{T}_X$, qui est un élément de $\mathrm{Ext}^1_{\mathcal{O}_X}(\mathrm{T}_X^{\otimes 2}, \mathrm{T}_X)$, ou de manière équivalente un morphisme 
\[
\mathrm{T}_X[-1] \otimes \mathrm{T}_X[-1] \rightarrow \mathrm{T}_X[-1]
\] 
dans $\mathrm{D}(X)$. Tout objet $\mathcal{F}$ de la catégorie dérivée $\mathrm{D}(X)$ devient naturellement une représentation de $\mathrm{T}_X[-1]$ grâce à la classe d'Atiyah de $\mathcal{F}$, qui est un morphisme 
\[
\mathrm{T}_X[-1] \otimes \mathcal{F} \rightarrow \mathcal{F}
\] 
dans $\mathrm{D}(X)$. En se fondant sur ces travaux, Calaque, C\u{a}ld\u{a}raru et Tu ont proposé dans \cite{calaque_pbw_2013} un dictionnaire entre algèbres de Lie et variétés algébriques: les catégories dérivées jouent le rôle des représentations, la cohérence correspond à la finitude, et les opérations de fonctorialité classique (restriction, induction) se transposent dans le cadre géométrique. 
\par \medskip
Dans \cite{calaque_ext_2017}, Calaque et l'auteur ont développé de manière systématique un certain nombre de résultats classiques pour les algèbres de Lie sur un corps de caractéristique zéro, comme l'isomorphisme PBW, ou la formule permettant d'exprimer explicitement la multiplication par des éléments de degré un dans l'algèbre enveloppante, dans le cas d'objets de Lie d'une catégorie monoidale symétrique $\mathbf{k}$-linéaire. Ces résultats permettent de démontrer facilement que l'algèbre enveloppante de $\mathrm{T}_X[-1]$ est isomorphe à la cohomologie de Hochschild de $\mathcal{O}_X$.  L'étape suivante est de comprendre l'analogue géométrique des paires réductives, qui sont les cycles analytiques quantifiés introduits par l'auteur dans \cite{grivaux_hochschild-kostant-rosenberg_2014}; ce sont des paires $X \subset Y$ munies d'une rétraction $\sigma$ à l'ordre $1$. Dans \cite{yu_todd_2015}, Yu introduit une condition supplémentaire sur les cycles quantifiés, que nous appellerons \textit{modération géométrique}: on impose que $\sigma^* \mathrm{N}_{X/Y}$ s'étend en un faisceau localement libre sur le second voisinage formel de $X$ dans $Y$. Dans \cite{calaque_ext_2017}, nous interprétons la condition de Yu comme la géométrisation d'une condition algébrique très naturelle sur les paires réductives. Enfin sous cette hypothèse de modération géométrique, nous pouvons décrire l'algèbre $\mathrm{Ext}^*_{\mathcal{O}_Y}(\mathcal{O}_X, \mathcal{O}_X)$.

\section{Un exemple élémentaire de géométrisation}
Dans cette section, nous présentons en guise d'introduction (avant d'aborder des cadres plus compliqués) une preuve du théorème de d'Alembert-Gau{\ss} par géométrisation.
\par \medskip
Soit $P$ un polynôme à coefficients complexes de degré $n>0$. On peut supposer que $P(0)$ est non nul, car sinon $0$ est une racine de $P$. Dans ce cas l'action de $t$ sur $V:=\mathbb{C}[t] / P \mathbb{C}[t]$ est un endomorphisme $u$ inversible de polynôme caractéristique $P$. La recherche d'une racine de $P$ est ainsi équivalente à l'existence d'un vecteur propre de $u$, et comme $u$ est inversible ceci se réduit à l'existence d'un point fixe de l'action de $u$ sur l'espace projectif $\mathbb{P}(V)$.  
\par \medskip
Le problème initial a donc été reformulé dans la version géométrique suivante: les racines de $P$ correspondent aux point d'intersection de deux sous-variétés de $\mathbb{P}(V) \times \mathbb{P}(V)$: la diagonale et le graphe de $u$. Le calcul du nombre de points d'intersection (avec multiplicités) de deux sous-variétés compactes d'une variété compacte orientée est un calcul homologique: on cherche à calculer
l'image de l'élément $[\Delta_{\mathbb{P}(V)}] \otimes [\Gamma_u]$ par le produit d'intersection
\[
\mathrm{H}_{2n-2}(\mathbb{P}(V) \times \mathbb{P}(V), \mathbb{Z})^{\otimes 2} \xrightarrow{\cap} \mathrm{H}_{0}(\mathbb{P}(V) \times \mathbb{P}(V), \mathbb{Z}) \simeq \mathbb{Z}.
\]
\par \medskip
Dans le cas précis de l'intersection de la diagonale avec un graphe, ce calcul est bien connu: il s'agit de la formule du point fixe de Lefschetz: le nombre d'intersection que l'on cherche est égal au nombre de Lefschetz
\[
L(u)=\sum_{i=0}^{2n-2} (-1)^i \, \mathrm{Tr} \left\{u_*  \circlearrowright \mathrm{H}_{i}(\mathbb{P}(V), \mathbb{Q})  \right\}
\] 
qui est égal à la somme alternée des traces des actions de $u$ en homologie singulière. Dans le cas de l'espace projectif complexe, l'homologie singulière est concentrée en degré pair, et pour tout $i$ entre $0$ et $n-1$, $\mathrm{H}_{2i}(\mathbb{P}(V), \mathbb{Q})$ est de dimension $1$, un générateur canonique étant la classe d'homologie de n'importe quel sous-espace complexe de $V$ de dimension complexe $i$ (ces sous-espaces sont tous homologues). Il s'ensuit que $u_*=\mathrm{id}$ sur chaque droite rationnelle $\mathrm{H}_{2i}(\mathbb{P}(V), \mathbb{Q})$, et donc le nombre de Lefschetz $L(u)$ égale $n$. En particulier, le graphe de $u$ doit intersecter la diagonale, donc $u$ a un point fixe.
\par \medskip
Cette preuve est ainsi de nature complètement \textit{globale}, car elle s'appuie sur la description des groupes d'homologie des espaces projectifs complexes. Le terme de \og{} géométrisation \fg s'explique par le fait que l'on est parti d'un objet de nature plutôt algébrique (un endomorphisme inversible d'un espace vectoriel complexe), et que l'on a ensuite étudié un objet géométrique qui lui était associé, à savoir ici la classe d'homologie du graphe de l'endomorphisme projectivisé. Dans la suite, nous allons nous intéresser à des variantes géométriques de problèmes faisant intervenir des algèbres de Lie de dimension finie sur un corps de caractéristique zéro. La géométrisation sera plus imparfaite au sens où l'on considèrera des problèmes analogues (et non directement construits à partir du problème algébrique initial).

\section{L'isomorphisme de Duflo}

Soit $\mathfrak{g}$ une algèbre de Lie de dimension finie sur un corps $\mathbf{k}$ de caractéristique zéro. Notons $\mathrm{S}(\mathfrak{g})$ l'algèbre symétrique de $\mathfrak{g}$, $\mathrm{T}(\mathfrak{g})$ son algèbre tensorielle, et $\mathrm{U}(\mathfrak{g})$ son algèbre enveloppante. 
Rappelons que $\mathrm{U}(\mathfrak{g})$ est le quotient de  $\mathrm{T}(\mathfrak{g})$ par l'idéal bilatère engendré par les éléments du type $x \otimes y - y \otimes x - [x, y]$ pour $x, y \in \mathfrak{g}$. On peut réaliser $\mathrm{S}(\mathfrak{g})$ comme le sous-espace vectoriel de $\mathrm{T}(\mathfrak{g})$ formé des tenseurs totalement symétriques.
On dispose alors de l'isomorphisme de Poincaré-Birkhoff-Witt: 
\[
\mathrm{I}_{\mathrm{PBW}} \colon \mathrm{S}(\mathfrak{g}) \hookrightarrow \mathrm{T}(\mathfrak{g}) \twoheadrightarrow \mathrm{U}(\mathfrak{g})
\]
Introduisons l'élément de Duflo $\mathfrak{d}$, qui est une série formelle invariante sur $\mathfrak{g}$, c.à.d. un élément de $\widehat{\mathrm{S}}(\mathfrak{g}^*)^{\mathfrak{g}}$, défini par la formule
\[
\mathfrak{d}(x)=\mathrm{det} \left( \frac{\mathrm{ad}(x)}{1-\exp(-\mathrm{ad}(x))} \right).
\]
Cet élément s'exprime comme une série universelle en les polynômes invariants $P_k$ définis par $P_k(x)=\mathrm{Tr} (\mathrm{ad}(x)^k)$: on a
\[
\mathfrak{d}= 1+\frac{P_1}{2}+ \frac{3P_1^2-P_2}{24}+ \frac{P_1^3-P_1P_2}{48}+ \ldots
\]
Cet élément permet de comprendre explicitement le centre de l'algèbre enveloppante $\mathrm{U}(\mathfrak{g})$ grâce au résultat suivant: 

\begin{theorem}[\cite{duflo_operateurs_1977}]
Le morphisme de $\mathrm{S}(\mathfrak{g})^{\mathfrak{g}}$ dans $\mathrm{U}(\mathfrak{g})^{\mathfrak{g}}$ donné par la formule $x \rightarrow \mathrm{I}_{\mathrm{PBW}}(\mathfrak{d}^{-1/2} \lrcorner \,x)$ est un isomorphismes d'algèbres.
\end{theorem}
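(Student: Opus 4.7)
Je traiterais séparément deux parties: la bijectivité linéaire et la multiplicativité. \emph{Premièrement}, pour la bijectivité, je vérifierais que $\mathrm{I}_{\mathrm{PBW}}$ est $\mathfrak{g}$-équivariant (ce qui résulte directement de la formule de symétrisation), de sorte qu'il induit une bijection linéaire entre $\mathrm{S}(\mathfrak{g})^{\mathfrak{g}}$ et $\mathrm{U}(\mathfrak{g})^{\mathfrak{g}} = \mathrm{Z}(\mathrm{U}(\mathfrak{g}))$. Puisque $\mathfrak{d}$ appartient à $\widehat{\mathrm{S}}(\mathfrak{g}^*)^{\mathfrak{g}}$, la contraction par $\mathfrak{d}^{-1/2}$ préserve $\mathrm{S}(\mathfrak{g})^{\mathfrak{g}}$, et la composée est donc une bijection linéaire à valeurs dans le centre.

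\emph{Deuxièmement}, je traiterais la multiplicativité, qui est le cœur non trivial de l'énoncé: ni $\mathrm{I}_{\mathrm{PBW}}$, ni sa restriction aux invariants, ne sont multiplicatifs en général (dès que $\mathfrak{g}$ est non abélienne), et la correction par $\mathfrak{d}^{-1/2}$ rétablit précisément cette propriété. Ma stratégie serait d'utiliser le théorème de formalité de Kontsevich appliqué à la variété de Poisson linéaire $\mathfrak{g}^*$ munie du crochet de Kirillov-Kostant-Souriau. Le star-produit de Kontsevich déforme alors $\mathrm{S}(\mathfrak{g})$ en une algèbre associative canoniquement isomorphe à $\mathrm{U}(\mathfrak{g})$, et le $L_\infty$-quasi-isomorphisme entre polyvecteurs et opérateurs polydifférentiels induit en cohomologie de degré zéro un morphisme d'algèbres entre le centre de Poisson $\mathrm{S}(\mathfrak{g})^{\mathfrak{g}}$ et $\mathrm{Z}(\mathrm{U}(\mathfrak{g}))$.

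L'obstacle principal serait d'identifier explicitement ce morphisme avec la formule de l'énoncé. Cette identification repose sur une analyse des contributions des graphes-roues de Kontsevich, dont le calcul des poids d'intégrales fait apparaître exactement le facteur $\mathfrak{d}^{-1/2}$; c'est essentiellement la seule partie véritablement technique de la preuve. Une approche alternative, plus en phase avec le point de vue catégorique développé dans l'article, consisterait à exploiter l'algèbre enveloppante catégorique de l'objet de Lie décalé $\mathfrak{g}[-1]$ et les formules explicites de multiplication établies dans \cite{calaque_ext_2017}, qui font apparaître le facteur $\mathfrak{d}^{-1/2}$ via les contributions des puissances de la classe d'Atiyah.
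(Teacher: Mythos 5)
Remarque préliminaire: le texte que vous commentez ne démontre pas cet énoncé, il le cite tel quel de \cite{duflo_operateurs_1977} comme point de départ du dictionnaire géométrique; votre esquisse doit donc être jugée sur ses propres mérites. La stratégie que vous proposez (quantification par déformation à la Kontsevich sur la variété de Poisson linéaire $\mathfrak{g}^*$) est une voie de démonstration authentique et bien documentée, différente de la preuve originale de Duflo. Votre première partie (bijectivité) est correcte et standard: $\mathrm{I}_{\mathrm{PBW}}$ est $\mathfrak{g}$-équivariante, et la contraction par $\mathfrak{d}^{-1/2}$, série invariante de terme constant $1$, est un automorphisme linéaire de $\mathrm{S}(\mathfrak{g})^{\mathfrak{g}}$. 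Signalons toutefois une imprécision: la multiplicativité de $\mathrm{I}_{\mathrm{PBW}}$ sur les invariants ne tombe pas en défaut \emph{dès que} $\mathfrak{g}$ est non abélienne; pour $\mathfrak{g}$ nilpotente par exemple, tous les $P_k$ s'annulent, donc $\mathfrak{d}=1$ et la symétrisation est déjà un isomorphisme d'algèbres sur les invariants.

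En revanche, votre affirmation selon laquelle le calcul des poids des graphes-roues serait \emph{essentiellement la seule partie véritablement technique} masque deux lacunes réelles. D'une part, un $L_\infty$-quasi-isomorphisme n'induit pas automatiquement un morphisme d'algèbres en cohomologie tangente: la compatibilité de l'application tangente de Kontsevich avec les cup-produits en degré zéro est elle-même un théorème difficile (section 8 de l'article de Kontsevich), qu'il faut invoquer ou redémontrer, et qui constitue sans doute le c\oe ur de cette approche. D'autre part, l'identification du facteur correctif avec $\mathfrak{d}^{-1/2}$ ne se réduit pas à un calcul direct: elle requiert l'annulation d'une famille d'intégrales de roues (théorème de Shoikhet, complété par des calculs de poids dus notamment à Van den Bergh), sans quoi on n'obtient l'application de Duflo qu'à une correction invariante inconnue près; il faut aussi justifier que l'algèbre déformée de Kontsevich pour la structure de Poisson linéaire est bien isomorphe à $\mathrm{U}(\mathfrak{g})$ (équivalence de jauge avec le star-produit de Gutt). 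Enfin, votre piste alternative via \cite{calaque_ext_2017} ne peut pas fonctionner telle quelle: cet article établit le PBW catégorique et des formules de multiplication par les éléments de degré un, mais il ne contient pas l'isomorphisme de Duflo; comme le présent texte le rappelle, le théorème de type Duflo dans le cadre géométrique est démontré dans \cite{calaque_hochschild_2010}, et c'est plutôt vers \cite{calaque_lectures_2011} qu'il faudrait se tourner pour une preuve complète de l'énoncé algébrique suivant la stratégie que vous esquissez.
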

Ici, $\lrcorner$ désigne la contraction. De plus, il est facile de voir que $\mathrm{U}(\mathfrak{g})^{\mathfrak{g}}$ est le centre de $\mathrm{U}(\mathfrak{g})$.

\section{Conjectures de Duflo pour les paires réductives}
On se donne maintenant une paire réductive $(\mathfrak{h}, \mathfrak{g})$ sur $\mathbf{k}$, c'est-à-dire deux algèbres de Lie de dimension finie $\mathfrak{h} \subset \mathfrak{g}$ ainsi qu'une décomposition $\mathfrak{h}$-équivariante $\mathfrak{g}=\mathfrak{h} \oplus \mathfrak{n}$. On s'intéresse alors à l'algèbre 
\[
A(\mathfrak{h}, \mathfrak{g})=\left( \frac{\mathrm{U}(\mathfrak{g})}{\mathrm{U}(\mathfrak{g}) \mathfrak{h}} \right)^{\mathfrak{h}}.
\]
On dispose à nouveau d'un morphisme de type Poincaré-Birkhoff-Witt $\mathfrak{h}$-équivariant donné par
\[
\mathrm{S}(\mathfrak{n}) \hookrightarrow \mathrm{S}(\mathfrak{g}) \hookrightarrow \mathrm{T}(\mathfrak{g}) \twoheadrightarrow \mathrm{U}(\mathfrak{g}) \twoheadrightarrow \frac{\mathrm{U}(\mathfrak{g})}{\mathrm{U}(\mathfrak{g}) \mathfrak{h}} \cdot
\]
Notons que l'hypothèse \og paire réductive \fg{} est ici trop restrictive, la condition nécessaire et suffisante pour avoir un tel isomorphisme a été établie dans \cite{calaque_pbw_2013}. 
\par \medskip
La conjecture de Duflo généralisée\footnote{La version précise de cette conjecture fait intervenir un caractère de $\mathfrak{h}$, nous ne l'avons pas incluse pour ne pas compliquer l'énoncé.} prédit que le centre de $A(\mathfrak{h}, \mathfrak{g})$ est isomorphe au centre de Poisson de $\mathrm{S}(\mathfrak{n})^{\mathfrak{h}}$. Le cas de la paire réductive diagonale $(\mathfrak{g}, \mathfrak{g}\oplus \mathfrak{g})$ correspond au théorème de Duflo précédemment cité. Cette conjecture reste largement ouverte, même dans le cas des paires symétriques, c.à.d. quand $\mathfrak{h}$ est le lieu fixe d'une involution de $\mathfrak{g}$ préservant le crochet de Lie.

\section{Paires réductives modérées}
Cette notion a été introduite dans \cite{calaque_ext_2017}, nous reviendrons à un analogue géométrique dans la dernière section.
\begin{definition}
Une paire réductive $(\mathfrak{g}, \mathfrak{h}, \mathfrak{n})$ est dite modérée si pour tous $n_1, n_2, n_3$ dans $\mathfrak{n}$, 
$[\pi_{\mathfrak{h}}[n_1, n_2], n_3]=0$.
\end{definition}
Remarquons que si $\mathfrak{h}$ est un idéal dans $\mathfrak{g}$ (ce qui signifie que $\mathfrak{g}=\mathfrak{h} \rtimes \mathfrak{n}$), cette condition est automatiquement satisfaite car $[\mathfrak{h}, \mathfrak{n}] \subset \mathfrak{h} \cap \mathfrak{n}=\{0\}$. 
\par \medskip
La condition de modération implique que le crochet définit sur $\mathfrak{n}$ par la formule $[n_1, n_2]_{\mathfrak{n}}=\pi_{\mathfrak{n}} ([n_1, n_2])$ est un crochet de Lie. De plus $\mathfrak{h}$ agit par dérivation sur $\mathfrak{n}$, et on a un isomorphisme $\mathfrak{h}$-équivariant
\[
\frac{\mathrm{U}(\mathfrak{g})}{\mathrm{U}(\mathfrak{g}) \mathfrak{h}} \simeq \mathrm{U}(\mathfrak{n}).
\]
En particulier, l'isomorphisme PBW pour $\displaystyle \frac{\mathrm{U}(\mathfrak{g})}{\mathrm{U}(\mathfrak{g}) \mathfrak{h}}$ se réduit à l'isomorphisme PBW pour l'algèbre de Lie $\mathfrak{n}$.
\section{Géométrisation de Kapranov-Markarian}
La notion géométrique naturelle attachée à une algèbre de Lie est celle de groupe de Lie. On va procéder ici en sens inverse: partir d'objets géométriques donnant lieu à des algèbres de Lie un peu plus générales que les algèbres de Lie classiques: il s'agira d'objets de Lie dans des catégories monoïdales symétriques.  
\par \medskip
Si $X$ est une variété algébrique lisse sur $\mathbf{k}$ et $E$ est un fibré vectoriel sur $X$, le fibré $\mathrm{J}^1(E)$ des $1$-jets à valeurs dans $E$ s'insère dans une suite exacte
\[
0 \rightarrow E \rightarrow \mathrm{J}^1 (E) \rightarrow \mathrm{T}_X \otimes E \rightarrow 0.
\]
ce qui fournit une classe d'extension $\mathrm{at}_E$ dans $\mathrm{Ext}^1_{\mathcal{O}_X}(\mathrm{T}_X \otimes E, E)$, appelée la classe d'Atiyah de $E$. Cette classe s'annule si et seulement si $E$ admet une connexion algébrique globale. Dans la suite, on considèrera $\mathrm{at}_E$ comme un morphisme de $\mathrm{T}_X [-1] \otimes E \rightarrow \mathrm{T}_X[-1]$ dans la catégorie dérivée $\mathrm{D}(X)$ des complexes de faisceaux de $\mathcal{O}_X$-modules. On a alors le résultat suivant: 

\begin{theorem}[\cite{kapranov_rozansky-witten_1999}, \cite{markarian_atiyah_2009}]
Étant donné une variété algébrique lisse $X$ sur $\mathbf{k}$, la classe d'Atiyah de $\mathrm{T}_X$, considérée comme un morphisme $\mathrm{T}_X[-1] \otimes \mathrm{T}_X[-1] \rightarrow \mathrm{T}_X[-1]$ dans $\mathrm{D}(X)$, définit une structure d'objet de Lie sur $\mathrm{T}_X[-1]$.
\end{theorem}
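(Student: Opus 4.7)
Le plan consiste à vérifier les deux axiomes d'un objet de Lie dans la catégorie monoïdale symétrique $\mathrm{D}(X)$: l'antisymétrie graduée et l'identité de Jacobi graduée. Le décalage joue un rôle crucial: via la règle des signes de Koszul, l'antisymétrie du crochet sur $\mathrm{T}_X[-1]$ se traduit par la symétrie de $\mathrm{at}_{\mathrm{T}_X}$ vu comme élément de $\mathrm{Ext}^1_{\mathcal{O}_X}(\mathrm{T}_X \otimes \mathrm{T}_X, \mathrm{T}_X)$, c'est-à-dire sa factorisation à travers $\mathrm{Sym}^2 \mathrm{T}_X$; l'identité de Jacobi graduée correspond à l'annulation d'une somme cyclique dans $\mathrm{Ext}^2_{\mathcal{O}_X}(\mathrm{T}_X^{\otimes 3}, \mathrm{T}_X)$.

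Je commencerais par établir la symétrie. En coordonnées locales, la classe d'Atiyah d'un fibré $E$ s'exprime à travers des symboles de type Christoffel; pour $E = \mathrm{T}_X$, la symétrie en les deux indices tangentiels $(i,j)$ reflète la nullité de la torsion de la connexion (ou, algébriquement, la commutativité de $\mathcal{O}_X$). Plus conceptuellement, $\mathrm{at}_{\mathrm{T}_X}$ est l'obstruction liée à l'inclusion du premier voisinage formel de la diagonale $\Delta \subset X \times X$, et cette construction se prolonge au second voisinage, ce qui force la factorisation à travers $\mathrm{Sym}^2 \mathrm{T}_X$.

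Pour Jacobi, je m'appuierais sur la règle de Leibniz de la classe d'Atiyah, à savoir $\mathrm{at}_{E \otimes F} = \mathrm{at}_E \otimes \mathrm{id}_F + \mathrm{id}_E \otimes \mathrm{at}_F$ dans $\mathrm{D}(X)$. Appliquée à $E = F = \mathrm{T}_X$ et combinée avec la fonctorialité de $\mathrm{at}$ par rapport au morphisme $\mathrm{at}_{\mathrm{T}_X}: \mathrm{T}_X \otimes \mathrm{T}_X \rightarrow \mathrm{T}_X[1]$ lui-même, ainsi qu'avec la symétrie déjà établie, elle fournit après une chasse au diagramme soigneuse l'identité cyclique recherchée.

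L'obstacle principal est précisément cette vérification de Jacobi, à cause du cumul des permutations et des signes induits par les décalages, qui rend le suivi explicite des morphismes dans la catégorie dérivée fastidieux. Une voie plus synthétique, inspirée de Markarian, consiste à identifier $\mathrm{RHom}_{\mathcal{O}_{X \times X}}(\Delta_* \mathcal{O}_X, \Delta_* \mathcal{O}_X)$ muni du produit de Yoneda à l'algèbre enveloppante d'une structure candidate de Lie sur $\mathrm{T}_X[-1]$ via un isomorphisme de type HKR, et à transporter l'associativité du cup-produit en l'identité de Jacobi; le soin nécessaire pour éviter toute circularité, puisque l'existence d'une algèbre enveloppante présuppose la structure de Lie, demeure néanmoins non négligeable.
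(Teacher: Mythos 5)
The paper itself contains no proof of this theorem: it is stated with citations to \cite{kapranov_rozansky-witten_1999} and \cite{markarian_atiyah_2009}, and the only proof written out in the text is for the \emph{next} theorem (the identification of $\mathrm{U}(\mathrm{T}_X[-1])$ with $\mathrm{pr}_{1*}\mathcal{RH}om_{\mathcal{O}_{X\times X}}(\mathcal{O}_{\Delta_X},\mathcal{O}_{\Delta_X})$). So your proposal can only be measured against the cited literature, and there it follows the standard route: symmetry of $\mathrm{at}_{\mathrm{T}_X}$ via the second formal neighbourhood of the diagonal (Kapranov), then the Jacobi identity from the Leibniz rule combined with naturality of the Atiyah class (Markarian, C\u{a}ld\u{a}raru--Willerton). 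Your dictionary between the axioms is correct: because of the shift and the Koszul sign rule, graded antisymmetry of the bracket on $\mathrm{T}_X[-1]$ is equivalent to symmetry of $\mathrm{at}_{\mathrm{T}_X}$ in $\mathrm{Ext}^1_{\mathcal{O}_X}(\mathrm{T}_X^{\otimes 2},\mathrm{T}_X)$, and that symmetry does follow from the fact that the jet sequence is the pushout of
\[
0 \rightarrow \mathcal{I}^2/\mathcal{I}^3 \rightarrow \mathcal{I}/\mathcal{I}^3 \rightarrow \mathcal{I}/\mathcal{I}^2 \rightarrow 0
\]
along $\mathrm{S}^2\Omega_X \hookrightarrow \Omega_X^{\otimes 2}$.

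The soft spot is the step you wave off as a \emph{chasse au diagramme}. Your Jacobi argument applies the functoriality of the Atiyah class to the morphism $\mathrm{at}_{\mathrm{T}_X}$ itself, which is not a map of sheaves but a degree-one morphism in $\mathrm{D}(X)$; the jet-bundle construction of $\mathrm{at}$ is functorial only for genuine $\mathcal{O}_X$-linear maps, so naturality with respect to arbitrary morphisms of $\mathrm{D}(X)$ is exactly the point that must be justified. The clean way is to define $\mathrm{at}$ by convolution with the morphism of Fourier--Mukai kernels $\delta_*\mathcal{O}_X \rightarrow \delta_*\Omega_X[1]$ attached to $0 \rightarrow \mathcal{I}/\mathcal{I}^2 \rightarrow \mathcal{O}_{X\times X}/\mathcal{I}^2 \rightarrow \delta_*\mathcal{O}_X \rightarrow 0$ --- this is precisely the morphism $\phi$ that the paper builds in its proof of the enveloping-algebra theorem; with that definition, naturality and the Leibniz rule are automatic, and writing $b:=\mathrm{at}_{\mathrm{T}_X}$, $\tau$ for the symmetry of $\mathrm{D}(X)$, the chase collapses to one line,
\[
b\circ(\mathrm{id}\otimes b) \;=\; b\circ(b\otimes\mathrm{id}) \;+\; b\circ(\mathrm{id}\otimes b)\circ(\tau\otimes\mathrm{id}),
\]
which is the graded Jacobi identity in Leibniz form; together with the symmetry already proved it gives the Lie object structure. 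So your plan is sound, but as written the only real difficulty is hidden inside an unproven functoriality assertion. Finally, your alternative \og{}synthetic\fg{} route through the enveloping algebra is indeed circular in the logical order followed here: both in \cite{markarian_atiyah_2009} and in \cite{calaque_ext_2017} the Lie structure is established first, and the Hochschild identification comes afterwards as a consequence; it cannot be used as input.
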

On va donc s'intéresser à une classe particulière d'objets de Lie attachés à des variétés algébriques lisses, il s'agit d'objets de Lie du type $\mathrm{T}_X[-1]$ où $X$ est lisse sur $\mathbf{k}$. Bien que ces objets ne soient pas exactement des algèbres de Lie au sens usuel, la géométrisation correspondante est à la fois structurellement très riche, car permettant d'utiliser tous les outils de géométrie algébrique, et relativement simple, car ne faisant pas intervenir de singularités ou de structures champêtres.
\par \medskip
Notons que cette construction est réalisable pour une catégorie d'objets géométriques plus généraux que les variétés algébriques lisses: dans \cite{hennion_tangent_2018} l'auteur montre que si $X$ est un champ d'Artin dérivé, le complexe tangent décalé $\mathbb{T}_X[-1]$ est également muni d'une structure de Lie. Dans le cas où $G$ est un groupe algébrique et $X=\mathrm{B}G$, $\mathbb{T}_X=\mathfrak{g}[1]$ et la structure de Lie sur $\mathbb{T}_X[-1]$ est exactement celle de l'algèbre de Lie $\mathfrak{g}$ de $G$.

\section{L'isomorphisme PBW géométrique}
L'isomorphisme de Poincaré-Birkhoff-Witt pour les algèbres de Lie classiques sur un corps de caractéristique zéro est un résultat élémentaire, mais néanmoins délicat. Parmi les preuves les plus conceptuelles, on trouve celle présentée dans \cite{braverman_poincarebirkhoffwitt_1996} utilisant la dualité de Koszul. L'isomorphisme PBW est en fait valable dans toute catégorie monoïdale symétrique karoubienne  $\mathbf{k}$-linéaire: ce résultat bien connu dans le folklore est montré rigoureusement dans \cite{calaque_ext_2017}, en combinant une stratégie dévelopée dans \cite{deligne_notes_1999} et une méthode opéradique. La difficulté principale dans le cas d'une catégorie non abélienne est de construire une algèbre $\mathcal{A}$ quotient de $\mathrm{T} (\mathfrak{g})$ satisfaisant les deux conditions suivantes: 
\begin{enumerate}
\item[(A1)] La composition
$
\mathrm{S}(\mathfrak{g}) \rightarrow \mathrm{T} (\mathfrak{g}) \rightarrow \mathcal{A}
$
est un isomorphisme. 
\item[(A2)]
Pour tout $p$, le morphisme $\mathrm{T}^p \mathfrak{g}  \rightarrow \mathrm{Gr}^p \mathcal{A} \simeq \mathrm{S}^p \mathfrak{g}$
est la symétrisation.
\end{enumerate}
Il est alors relativement aisé de montrer (il s'agit du \og reverse categorical PBW theorem \fg{} de \cite{calaque_ext_2017}) que $\mathcal{A}$ est l'algèbre enveloppante de $\mathfrak{g}$, au sens ou pour toute algèbre $\mathcal{B}$, 
\[
\mathrm{Hom}_{\mathrm{Lie}}(\mathfrak{g}, \mathrm{Lie}(\mathcal{B})) \simeq \mathrm{Hom}_{\mathrm{Alg}}(\mathrm{U}(\mathfrak{g}), \mathcal{B}).
\]
Ici, $ \mathrm{Lie}(\mathcal{B})$ est l'algèbre $\mathcal{B}$ munie du crochet de Lie donné par le commutateur. 
\par \medskip
Dans le cas géométrique, une telle algèbre est facile à produire, ce qui va nous permettre de démontrer le théorème suivant: 
\begin{theorem}[\cite{calaque_ext_2017}, \cite{markarian_atiyah_2009}, \cite{ramadoss_relative_2008}]
Étant donné une variété algébrique lisse sur $\mathbf{k}$, l'algèbre enveloppante de l'objet de Lie $\mathrm{T}_X[-1]$ est isomorphe à $\mathrm{pr}_{1*} \mathcal{RH}om_{\mathcal{O}_{X \times X}}(\mathcal{O}_{\Delta_X}, \mathcal{O}_{\Delta_X})$.
\end{theorem}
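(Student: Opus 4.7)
Le plan est d'appliquer le \emph{reverse categorical PBW theorem} de \cite{calaque_ext_2017} à l'algèbre
\[
\mathcal{A} := \mathrm{pr}_{1*} \mathcal{RH}om_{\mathcal{O}_{X \times X}}(\mathcal{O}_{\Delta_X}, \mathcal{O}_{\Delta_X}),
\]
munie de sa structure d'algèbre associative dans $\mathrm{D}(X)$ donnée par la composition (produit de Yoneda) des $\mathcal{RH}om$. L'objectif est donc de produire un morphisme d'algèbres $\varphi\colon \mathrm{T}(\mathrm{T}_X[-1]) \rightarrow \mathcal{A}$ vérifiant les conditions (A1) et (A2) de la section précédente; le théorème de PBW inverse fournira alors l'identification $\mathcal{A} \simeq \mathrm{U}(\mathrm{T}_X[-1])$ au sens de la propriété universelle.

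La première étape est la construction d'un morphisme $j \colon \mathrm{T}_X[-1] \rightarrow \mathcal{A}$. Celui-ci provient de la classe d'Atiyah de $\mathcal{O}_{\Delta_X}$ vu comme faisceau cohérent sur $X \times X$: la décomposition $\mathrm{T}_{X \times X}|_{\Delta_X} \simeq \mathrm{T}_X \oplus \mathrm{T}_X$ permet d'extraire la composante de la classe d'Atiyah dirigée par le premier facteur, ce qui, par adjonction et poussé en avant par $\mathrm{pr}_1$, fournit $j$. Par la propriété universelle de l'algèbre tensorielle, $j$ s'étend en un morphisme d'algèbres $\varphi \colon \mathrm{T}(\mathrm{T}_X[-1]) \rightarrow \mathcal{A}$.

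La deuxième étape est l'identification de $\mathcal{A}$ en tant qu'objet sous-jacent. L'isomorphisme de Hochschild-Kostant-Rosenberg, obtenu en résolvant localement $\mathcal{O}_{\Delta_X}$ par un complexe de Koszul et en calculant ensuite les $\mathcal{E}xt$, identifie $\mathcal{A}$ avec $\bigoplus_{p \geq 0} \wedge^p \mathrm{T}_X [-p]$, lequel n'est autre que $\mathrm{S}(\mathrm{T}_X[-1])$ dans la catégorie dérivée en tenant compte des signes de Koszul. Le voisinage formel de la diagonale munit par ailleurs $\mathcal{A}$ d'une filtration naturelle (par l'ordre d'annulation le long de $\Delta_X$) dont le gradué est précisément cet isomorphisme HKR; ceci fournit la moitié géométrique de (A1).

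Le point principal et délicat est la vérification de (A2): il faut montrer que le morphisme induit par $\varphi$ sur les gradués
\[
\mathrm{T}^p \mathrm{T}_X[-1] \longrightarrow \mathrm{Gr}^p \mathcal{A} \simeq \mathrm{S}^p \mathrm{T}_X[-1]
\]
est la symétrisation. Cela revient à calculer explicitement le produit de Yoneda itéré des classes d'Atiyah et à le comparer à l'isomorphisme HKR. La commutativité de $\mathrm{Gr}\,\mathcal{A}$ (les signes de Koszul transformant la structure tensorielle sur $\mathrm{T}_X[-1]$ en une structure super-commutative) force la composée à se factoriser par la symétrisation; un calcul local sur les résolutions de Koszul, ou encore l'utilisation du résultat analogue de \cite{ramadoss_relative_2008} pour la compatibilité entre produit de Yoneda et produit extérieur, fournit la normalisation correcte. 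Une fois (A1) et (A2) établies, le reverse categorical PBW conclut, et c'est cette dernière vérification de compatibilité entre Yoneda et symétrisation qui constitue le cœur technique de la preuve.
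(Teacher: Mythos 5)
Votre preuve est correcte et suit essentiellement la même stratégie que celle de l'article : construction d'un morphisme $\mathrm{T}_X[-1] \rightarrow \mathrm{pr}_{1*} \mathcal{RH}om_{\mathcal{O}_{X \times X}}(\mathcal{O}_{\Delta_X}, \mathcal{O}_{\Delta_X})$ via la classe d'Atiyah universelle de la diagonale (votre composante \og premier facteur \fg{} de la classe d'Atiyah de $\mathcal{O}_{\Delta_X}$ coïncide avec la classe d'extension $\phi$ de la suite $0 \rightarrow \mathcal{I}/\mathcal{I}^2 \rightarrow \mathcal{O}_{X \times X}/\mathcal{I}^2 \rightarrow \mathcal{O}_{\Delta_X} \rightarrow 0$ utilisée dans l'article), extension par la propriété universelle de l'algèbre tensorielle, vérification de (A1) par l'isomorphisme HKR et de (A2) par des calculs locaux de type Koszul, puis conclusion par le théorème PBW inverse. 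La seule différence est de présentation, non de substance.
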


\begin{proof}
On considère l'algèbre $\mathrm{pr}_{1*} \mathcal{RH}om_{\mathcal{O}_{X \times X}}(\mathcal{O}_{\Delta_X}, \mathcal{O}_{\Delta_X})$. Si $\mathcal{I}$ est l'idéal de la diagonale dans $X \times X$, on a une suite exacte
\[
0 \rightarrow \frac{\mathcal{I}}{\mathcal{I}^2} \rightarrow \frac{\mathcal{O}_{X \times X}}{\mathcal{I}^2} \rightarrow \frac{\mathcal{O}_{X \times X}}{\mathcal{I}} \rightarrow 0
\]
et donc un morphisme $\phi$ de $\delta_* \mathcal{O}_{X}$ dans $\delta_* \Omega_X[1]$ dans $\mathrm{D}(X \times X)$, où $\delta$ est l'injection diagonale. On en déduit un morphisme
\begin{align*}
\mathrm{T}_X[-1]& \,\simeq \,  \mathcal{RH}om_{\mathcal{O}_X}(\Omega_X[1], \mathcal{O}_X)\\
 &\rightarrow \mathrm{pr}_{1*} \mathcal{RH}om_{\mathcal{O}_{X \times X}}(\delta_* \Omega_X[1], \delta_* \mathcal{O}_X) \\
&\xrightarrow{ \circ \phi } \mathrm{pr}_{1*} \mathcal{RH}om_{\mathcal{O}_{X \times X}}(\delta_* \mathcal{O}_X, \delta_* \mathcal{O}_X) 
\end{align*}
La propriété universelle de l'algèbre tensorielle fournit alors un morphisme 
\begin{equation} \label{yolo}
\mathrm{T}(\mathrm{T}_X[-1]) \rightarrow \mathrm{pr}_{1*} \mathcal{RH}om_{\mathcal{O}_{X \times X}}(\delta_* \mathcal{O}_X, \delta_* \mathcal{O}_X) 
\end{equation}
et il est facile de vérifier par des calculs locaux que les propriétés (A1) et (A2) sont satisfaites.
\end{proof}
On peut remarquer que la géométrisation permet de réaliser de manière alternative certaines de ces constructions: le morphisme \eqref{yolo} peut être construit comme le morphisme
\[
 \mathrm{pr}_{1*} \mathcal{RH}om_{{\mathcal{O}_{X \times X}}/{\mathcal{I}^2}}(\mathcal{O}_{\Delta_X}, \mathcal{O}_{\Delta_X})  \rightarrow \mathrm{pr}_{1*} \mathcal{RH}om_{\mathcal{O}_{X \times X}}(\mathcal{O}_{\Delta_X}, \mathcal{O}_{\Delta_X}) .
\]
De plus, l'isomorphisme 
\[
\mathrm{S}(\mathrm{T}_X[-1]) \simeq  \mathrm{pr}_{1*} \mathcal{RH}om_{\mathcal{O}_{X \times X}/\mathcal{I}^2}(\mathcal{O}_{\Delta_X}, \mathcal{O}_{\Delta_X})
\] 
fournissant la propriété (A1) est l'isomorphisme de Hochschild-Kostant-Rosenberg géométrique. Pour ces énoncés, on renvoie le lecteur aux articles \cite{arinkin_when_2012} et \cite{grivaux_hochschild-kostant-rosenberg_2014}. Enfin, mentionnons que l'isomorphisme de Duflo dans ce cadre a été démontré dans \cite{calaque_hochschild_2010}, on renvoie à \cite{calaque_lectures_2011} pour une exposition détaillée de ce résultat difficile.

\section{Cycles quantifiés modérés}
La notion de cycle quantifié, introduite dans \cite{grivaux_hochschild-kostant-rosenberg_2014}, est une géométrisation de celle de paire réductive. Le modèle géométrique que nous associons à une paire d'algèbres de Lie $(\mathfrak{h}, \mathfrak{g})$ où $\mathfrak{h}$ est une sous-algèbre de Lie de $\mathfrak{g}$, est une paire $(X, Y)$ de variétés algébriques lisses, où $X$ est une sous-variété fermée de $Y$. L'inclusion d'algèbres de Lie $\mathfrak{h} \hookrightarrow \mathfrak{g}$ est alors donnée dans ce contexte par le morphisme $\mathrm{T}_X[-1] \rightarrow \mathrm{T}_Y[-1]_{|X}$.
\par \medskip
Le cadre géométrique correspondant aux paires réductives est donc la donnée d'un isomorphisme $\mathrm{T}_Y[-1]_{|X} \simeq\mathrm{T}_X[-1] \oplus \mathrm{N}_{X/Y}[-1]$, c'est à dire un scindage global de la suite normale
\[
0 \rightarrow \mathrm{T}X \rightarrow \mathrm{T}Y_{|X} \rightarrow \mathrm{N}_{X/Y} \rightarrow 0.
\] 
Une telle paire $(X, Y)$ munie d'un scindage de la suite normale (qui n'existe pas toujours) est appelé un cycle quantifié. Le scindage peut être encodé par une rétraction  globale de l'application de $X$ vers son premier voisinage formel $X_Y^{(1)}$ dans $Y$.
\par \medskip
On peut maintenant hiérarchiser les paires réductives en trois niveaux: 
\par \medskip
\[
\xymatrix{ \textrm{paires réductives}\, (\mathfrak{h}, \mathfrak{g})\, \textrm{avec}\, \mathfrak{g}=\mathfrak{h} \rtimes \mathfrak{n} \ar@{=>}[d] \\
\textrm{paires réductives modérées} \ar@{=>}[d] \\
\textrm{paires réductives}
}
\]
\par \medskip
Comprendre à quoi correspondent ces conditions dans le cas d'une paire géométrique n'est pas immédiat. Pour le cas où $\mathfrak{h}$ est un idéal, cela correspond à l'existence d'un relèvement de la quantification $\sigma \colon X_Y^{(1)} \rightarrow X$ à $X_Y^{(2)}$.
\[
\xymatrix{ X_Y^{(2)} \ar@{-->}[rd]& \\
 X_Y^{(1)} \ar[u] \ar@<2pt>[r]^-{\sigma} & X \ar@<2pt>[l]
}
\]
En d'autres termes, la rétraction à l'ordre un $\sigma$ doit admettre une extension à l'ordre deux.
\par \medskip
Dans le cas modéré, la condition géométrique a été découverte (indépendamment de toute considération de théorie de Lie) dans l'article \cite{yu_todd_2015}: nous dirons que la paire $(X, Y)$ est modérée si le faisceau localement libre $\sigma^* \mathrm{N}_{X/Y}$ sur $X^{(1)}_Y$ s'étend en un faisceau localement libre sur le second voisinage formel $X^{(2)}_Y$.
\par \medskip
Dans le cas d'une paire géométrique modérée $(X, Y)$, le fibré normal décalé $\mathrm{N}_{X/Y}[-1]$ est donc naturellement muni d'une structure de Lie. Le résultat principal que nous établissons est le suivant: 

\begin{theorem}[\cite{calaque_ext_2017}]
Soit $(X, \sigma)$ un cycle quantifié modéré dans $Y$, et $\mathcal{J}$ l'idéal de $X$ dans $Y$. Alors: 
\begin{enumerate}
\item[--] La structure de Lie sur $\mathrm{N}_{X/Y}[-1]$ découlant de l'hypothèse de modération est associée à la classe d'extension de la suite exacte 
\[
0 \rightarrow \frac{\mathcal{J}^2}{\mathcal{J}^3} \rightarrow \sigma_* \frac{\mathcal{J}}{\mathcal{J}^3} \rightarrow \frac{\mathcal{J}}{\mathcal{J}^2} \rightarrow 0.
\]
\item[--] Les objets $\mathcal{RH}om_{O_Y}^{\ell}(\mathcal{O}_X, \mathcal{O}_X)$ et $\mathcal{RH}om_{O_Y}^{r}(\mathcal{O}_X, \mathcal{O}_X)$ de $\mathrm{D}(X)$ sont naturellement deux algèbres dans $\mathrm{D}(X)$, et toutes les deux sont isomorphes à l'algèbre enveloppante $\mathrm{U}(\mathrm{N}_{X/Y}[-1])$.
\item[--] L'élément de Duflo de $\mathrm{N}_{X/Y}[-1]$ est égal à la classe de cycle quantifiée $\{X\}_{\sigma}$ introduite dans \cite{grivaux_hochschild-kostant-rosenberg_2014}.
\end{enumerate}
\end{theorem}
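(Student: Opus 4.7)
La démonstration se décompose suivant les trois assertions de l'énoncé.

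\emph{Pour la première assertion}, mon plan est d'interpréter la condition de modération comme produisant une classe d'Atiyah secondaire. Plus précisément, l'existence d'une extension de $\sigma^*\mathrm{N}_{X/Y}$ à $X^{(2)}_Y$ permet de définir, par différence avec l'extension triviale locale, une classe canonique dans $\mathrm{Ext}^1_{\mathcal{O}_X}(\mathrm{N}_{X/Y} \otimes \mathrm{N}_{X/Y}, \mathrm{N}_{X/Y})$; c'est cette classe qui fournit la structure de Lie sur $\mathrm{N}_{X/Y}[-1]$. L'identification avec la classe d'extension de la suite de l'énoncé s'obtient alors par dualité, en utilisant les isomorphismes $\mathcal{J}/\mathcal{J}^2 \simeq \mathrm{N}^*_{X/Y}$ et $\mathcal{J}^2/\mathcal{J}^3 \simeq \mathrm{S}^2 \mathrm{N}^*_{X/Y}$ (ce dernier étant garanti par la modération), et se confirme par un calcul local dans des coordonnées adaptées à la rétraction $\sigma$.

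\emph{Pour la deuxième assertion}, je suivrais la stratégie du théorème PBW géométrique de la section précédente. La structure d'algèbre sur $\mathcal{RH}om^{\ell}_{\mathcal{O}_Y}(\mathcal{O}_X, \mathcal{O}_X)$ (resp. $\mathcal{RH}om^{r}$) provient de la composition, définie à l'aide d'une résolution libre à gauche (resp. à droite) de $\mathcal{O}_X$ sur $\mathcal{O}_Y$. Le morphisme canonique
\[
\mathcal{RH}om_{\mathcal{O}_Y/\mathcal{J}^2}(\mathcal{O}_X, \mathcal{O}_X) \rightarrow \mathcal{RH}om_{\mathcal{O}_Y}(\mathcal{O}_X, \mathcal{O}_X),
\]
combiné à la propriété universelle de l'algèbre tensorielle, fournit un morphisme $\mathrm{T}(\mathrm{N}_{X/Y}[-1]) \rightarrow \mathcal{RH}om^{\ell}_{\mathcal{O}_Y}(\mathcal{O}_X, \mathcal{O}_X)$. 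Le point crucial est la vérification des conditions (A1) et (A2) du théorème PBW catégorique. La condition (A1), analogue relatif de l'isomorphisme de Hochschild-Kostant-Rosenberg, équivaut à l'isomorphisme $\mathcal{RH}om_{\mathcal{O}_Y/\mathcal{J}^2}(\mathcal{O}_X, \mathcal{O}_X) \simeq \mathrm{S}(\mathrm{N}_{X/Y}[-1])$, et c'est ici que la modération intervient de manière essentielle. La symétrie (A2) découle d'un calcul local direct. On conclut alors par le \og reverse categorical PBW theorem \fg{} de \cite{calaque_ext_2017}; l'égalité entre les versions $\ell$ et $r$ résulte de ce qu'elles calculent la même algèbre enveloppante via des résolutions différentes.

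\emph{La troisième assertion est clairement l'obstacle principal.} Il s'agit d'identifier deux objets d'origines très différentes: d'une part l'élément de Duflo $\mathfrak{d}$ attaché à l'objet de Lie $\mathrm{N}_{X/Y}[-1]$, défini par la formule universelle $\det \left( \mathrm{ad}(x)/(1-\exp(-\mathrm{ad}(x))) \right)$, et d'autre part la classe de cycle quantifiée $\{X\}_{\sigma}$, construite géométriquement dans \cite{grivaux_hochschild-kostant-rosenberg_2014}. Ma stratégie serait de relier les deux via l'isomorphisme PBW géométrique établi au point précédent, en montrant que $\{X\}_{\sigma}$ s'identifie à l'image d'un élément canonique par une composition faisant intervenir $\mathrm{S}(\mathrm{N}_{X/Y}[-1]) \rightarrow \mathrm{U}(\mathrm{N}_{X/Y}[-1])$. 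Les facteurs combinatoires correspondant à la série $\mathfrak{d}$ devraient alors émerger de la formule explicite pour la multiplication par des éléments de degré un dans l'algèbre enveloppante, démontrée dans \cite{calaque_ext_2017}, ce qui permettrait d'éviter un recours direct à un argument de type Kontsevich tout en expliquant l'apparition de la même série universelle dans les deux contextes.
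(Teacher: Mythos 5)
Your sketches for the first and third assertions are broadly consistent with the intended line of proof: the first point is a generalization of the Kapranov--Markarian construction, with the jet/diagonal sequence replaced by the sequence $0 \rightarrow \mathcal{J}^2/\mathcal{J}^3 \rightarrow \sigma_* \mathcal{J}/\mathcal{J}^3 \rightarrow \mathcal{J}/\mathcal{J}^2 \rightarrow 0$, and the third does go through the PBW isomorphism combined with the explicit formula for multiplication by degree-one elements in the enveloping algebra established in \cite{calaque_ext_2017}. One local slip: the identification $\mathcal{J}^2/\mathcal{J}^3 \simeq \mathrm{S}^2 \mathrm{N}^*_{X/Y}$ holds for any closed pair of smooth varieties and owes nothing to the tameness hypothesis; what tameness provides is the extension of $\sigma^* \mathrm{N}_{X/Y}$ to $X^{(2)}_Y$, hence the bracket itself.

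The genuine gap is in your treatment of the second assertion, and it is precisely the point singled out here as the essential difficulty. You take for granted that composition, \og définie à l'aide d'une résolution libre \fg{}, endows $\mathcal{RH}om^{\ell}_{\mathcal{O}_Y}(\mathcal{O}_X, \mathcal{O}_X)$ with an algebra structure in $\mathrm{D}(X)$, and you then conclude by the reverse categorical PBW theorem. But composition is only natural when $\mathcal{H}om$ is derived as a bifunctor, in which case the resulting algebra lives in $\mathrm{D}(Y)$: concretely, composing two elements of $\mathcal{H}om_{\mathcal{O}_Y}(P^{\bullet}, \mathcal{O}_X)$ requires lifting one of them to an endomorphism of the resolution $P^{\bullet}$, an operation which is $\mathcal{O}_Y$-linear and well defined only up to homotopies that are not $\mathcal{O}_X$-linear. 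When $\mathcal{H}om$ is derived with respect to a single variable --- which is exactly what makes the object land in $\mathrm{D}(X)$ rather than $\mathrm{D}(Y)$ --- there is no natural algebra structure, and consequently the reverse PBW theorem is inapplicable as stated. The algebra structures on $\mathcal{RH}om^{\ell}$ and $\mathcal{RH}om^{r}$ must be constructed by hand, and this construction is the main technical content of the proof in \cite{calaque_ext_2017}; your plan, by treating these structures as automatic, skips the heart of the theorem. In particular, your closing claim that the $\ell$ and $r$ versions agree \og because they compute the same enveloping algebra via different resolutions \fg{} presupposes exactly the two hand-made structures and their separate comparisons with $\mathrm{U}(\mathrm{N}_{X/Y}[-1])$ that remain to be carried out.
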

Quelques commentaires sur ce résultat: le premier point est une généralisation de la construction de la structure de Lie de Kapranov-Markarian. Le second point est le plus délicat: si on dérive $\mathcal{H}om$ comme bi-foncteur, on obtient une algèbre dans $\mathrm{D(Y)}$. Les exposants \og r \fg{} et \og $\ell$ \fg{} signifient que le foncteur $\mathcal{H}om$ n'est dérivé que par rapport à l'une des deux variables (la variable de droite ou celle de gauche respectivement), afin de prendre ses valeurs dans $\mathrm{D}(X)$. Le problème essentiel est qu'il n'y a pas de structure d'algèbre naturelle quand on dérive par rapport à une seule des variables, et par suite le théorème PBW inverse est inapplicable. Il faut donc construire la structure d'algèbre à la main, ce qui est une difficulté technique importante. Enfin, le troisième point a été démontré initialement dans \cite{yu_todd_2015} par des méthodes entièrement différentes.

\begin{corollary}
Si $(X, \sigma)$ est un cycle quantifié modéré dans $Y$, alors l'algèbre $\mathrm{Ext}^*_{\mathcal{O}_Y}(\mathcal{O}_X, \mathcal{O}_X)$ ne dépend que du second voisinage formel de $X$ dans $Y$.
\end{corollary}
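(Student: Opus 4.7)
Le plan est d'invoquer directement le second point du théorème précédent et de suivre la dépendance de la construction de l'algèbre enveloppante en ses ingrédients géométriques.

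Par ce second point, on dispose d'un isomorphisme
\[
\mathcal{RH}om^{\ell}_{\mathcal{O}_Y}(\mathcal{O}_X, \mathcal{O}_X) \simeq \mathrm{U}(\mathrm{N}_{X/Y}[-1])
\]
dans la catégorie des algèbres de $\mathrm{D}(X)$. Comme ce faisceau est à support dans $X$, appliquer $\mathrm{R}\Gamma(X, -)$ reproduit l'algèbre $\mathrm{Ext}^*_{\mathcal{O}_Y}(\mathcal{O}_X, \mathcal{O}_X)$ munie du produit de Yoneda. Il suffit donc de montrer que l'algèbre $\mathrm{U}(\mathrm{N}_{X/Y}[-1])$ dans $\mathrm{D}(X)$ ne dépend que de $X_Y^{(2)}$, équipé de la rétraction $\sigma$ et de la donnée de modération.

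L'algèbre enveloppante étant un foncteur sur la catégorie des objets de Lie de $\mathrm{D}(X)$, on est ramené à montrer que l'objet de Lie $\mathrm{N}_{X/Y}[-1]$ est déterminé par $X_Y^{(2)}$. L'objet sous-jacent, dual décalé du conormal $\mathcal{J}/\mathcal{J}^2$, ne dépend que de $X_Y^{(1)}$. Le crochet est, d'après le premier point du théorème, la classe d'extension de la suite
\[
0 \to \mathcal{J}^2/\mathcal{J}^3 \to \sigma_* \mathcal{J}/\mathcal{J}^3 \to \mathcal{J}/\mathcal{J}^2 \to 0,
\]
qui ne fait intervenir que le faisceau $\mathcal{J}/\mathcal{J}^3$, sa filtration par les puissances de $\mathcal{J}$ et la rétraction $\sigma$: toutes ces données vivent sur $X_Y^{(2)}$. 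La condition de modération elle-même, qui garantit l'existence de la structure de Lie, est également une condition sur $X_Y^{(2)}$.

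En rassemblant: la donnée de $X_Y^{(2)}$ comme cycle quantifié modéré détermine la suite exacte courte ci-dessus, donc sa classe d'extension, donc le crochet de Lie sur $\mathrm{N}_{X/Y}[-1]$, donc l'algèbre enveloppante $\mathrm{U}(\mathrm{N}_{X/Y}[-1])$ dans $\mathrm{D}(X)$, et finalement, par hypercohomologie sur $X$, l'algèbre $\mathrm{Ext}^*_{\mathcal{O}_Y}(\mathcal{O}_X, \mathcal{O}_X)$. L'argument ne présente pas de difficulté technique: tout le contenu substantiel est déjà encodé dans le théorème précédent, et le corollaire en exprime simplement une conséquence immédiate, à savoir que l'intégralité du calcul Ext a été ramenée à une donnée d'ordre deux.
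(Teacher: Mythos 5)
Votre preuve est correcte et suit exactement la démarche du texte, qui énonce ce corollaire comme conséquence immédiate du théorème précédent : vous explicitez précisément cette déduction, à savoir que $\mathrm{Ext}^*_{\mathcal{O}_Y}(\mathcal{O}_X, \mathcal{O}_X)$ s'obtient par sections globales dérivées de $\mathrm{U}(\mathrm{N}_{X/Y}[-1])$ (point 2), et que la structure de Lie sur $\mathrm{N}_{X/Y}[-1]$ est donnée par la classe d'extension de la suite $0 \to \mathcal{J}^2/\mathcal{J}^3 \to \sigma_* \mathcal{J}/\mathcal{J}^3 \to \mathcal{J}/\mathcal{J}^2 \to 0$ (point 1), laquelle ne fait intervenir que des données vivant sur $X_Y^{(2)}$ (avec la rétraction et la donnée de modération). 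Rien à redire.
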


\section{Conclusion et perspectives}
Si $X_1$ et $X_2$ sont deux sous-variétés algébriques lisses fermées dans une variété algébrique lisse ambiante $Y$, les algèbres $\mathscr{T}or^*_{\mathcal{O}_Y}(\mathcal{O}_{X_1}, \mathcal{O}_{X_2})$ et $\mathscr{E}xt^*_{\mathcal{O}_Y}(\mathcal{O}_{X_1}, \mathcal{O}_{X_2})$ sont intimement reliées à la géométrie de l'intersection de $X_1$ et $X_2$ dans $Y$, l'exemple le plus frappant étant la 
formule des Tor de Serre \cite{serre_local_2000}. Certaines hypothèses globales géométriques fournissent des structures additionnelles
algébriques sur les deux algèbres précitées: par exemple si $Y$ est une variété hyperkählerienne et $X_1$, $X_2$ sont lagrangiennes, il est démontré dans \cite{BF} que la première algèbre est naturellement une algèbre de Gerstenhaber, et la seconde une algèbre de Batalin-Vilkovisky. 
\par \medskip
Dans le cas $X_1=X_2=Y$, comprendre les algèbres $\mathcal{O}_X \otimes_{\mathcal{O}_Y}^{\mathbb{L}} \mathcal{O}_X$ et $\mathcal{RH}om_{\mathcal{O}_Y}(\mathcal{O}_X, \mathcal{O}_X)$ est un problème compliqué, certains éléments de réponse étant fournis dans les articles \cite{arinkin_when_2012}, \cite{calaque_lie_2014} et \cite{grivaux_derived_2015} et \cite{calaque_ext_2017}:

\begin{enumerate}
\item[--] L'article \cite{calaque_pbw_2013} donne une réponse en toute généralité, mais la structure d'algébroïde de Lie sur $\mathrm{N}_{X/Y}[-1]$ n'est pas facile à manipuler en pratique.
\item[--] L'article \cite{arinkin_when_2012} fournit un isomorphisme multiplicatif \[ \mathcal{O}_X \otimes_{\mathcal{O}_Y}^{\mathbb{L}} \mathcal{O}_X \simeq \mathrm{S}(\mathrm{N}^*_{X/Y}[1])\] ainsi qu'un isomorphisme additif \[\mathcal{RH}om_{\mathcal{O}_Y}(\mathcal{O}_X, \mathcal{O}_X) \simeq  \mathrm{S}(\mathrm{N}_{X/Y}[-1])\] pour les cycles quantifiés.
\item[--] L'article \cite{calaque_ext_2017} dont il est question dans ce texte permet de décrire explicitement $\mathcal{RH}om_{\mathcal{O}_Y}(\mathcal{O}_X, \mathcal{O}_X)$ pour les cycles quantifiés modérés.
\end{enumerate}
Pour aller plus loin, on peut poursuivre dans différentes directions.
\begin{enumerate}
\item[--] Parmi les cycles quantifiés, on peut chercher à étudier d'autres exemples qui ne sont pas en général modérés, par exemple les lieux fixes d'une action par un groupe réductif. Dans le cas de $\mathbb{Z}/{2\mathbb{Z}}$, cela correspond à étudier une version géométrique des paires symétriques.
\item[--] Si on s'affranchit de l'hypothèse de quantification, la situation devient bien plus compliquée: il faut étudier en détail la structure d'algébroïde de Lie à homotopie près sur $\mathrm{N}_{X/Y}[-1]$ construite dans \cite{calaque_lie_2014}.
\end{enumerate}

\bibliographystyle{plain}
\bibliography{smf.bib}

\end{document}